\documentclass[a4paper,12pt]{article}
\usepackage{a4wide}
\usepackage{amsmath}
\usepackage{amssymb}
\usepackage{amsthm}
\usepackage{latexsym}
\usepackage{graphicx}
\usepackage[english]{babel}
\usepackage{makeidx}

\newtheorem{obs} [subsection]{Remark}
\newtheorem{exm} [subsection]{Example}

\newtheorem{teor}[subsection]{Theorem}
\newtheorem{lema}[subsection]{Lemma}

\newcommand{\Zng}{$\mathbb Z^n$-graded $S$-module}

\def\sdepth{\operatorname{sdepth}}
\def\depth{\operatorname{depth}}

\begin{document}
\selectlanguage{english}
\frenchspacing

\large
\begin{center}
\textbf{Stanley depth of the path ideal associated to a line graph}

Mircea Cimpoea\c s
\end{center}
\normalsize

\begin{abstract}
We consider the path ideal associated to a line graph, we compute \texttt{sdepth} for its quotient ring and note that it is equal with its \texttt{depth}. In particular, it satisfies the Stanley inequality.

\noindent \textbf{Keywords:} Stanley depth, Stanley inequality, path ideal, line graph, simplicial tree.

\noindent \textbf{MSC 2010:}Primary: 13C15, Secondary: 13P10, 13F20.
\end{abstract}

\section*{Introduction}

Let $K$ be a field and $S=K[x_1,\ldots,x_n]$ the polynomial ring over $K$.
Let $M$ be a \Zng. A \emph{Stanley decomposition} of $M$ is a direct sum $\mathcal D: M = \bigoplus_{i=1}^rm_i K[Z_i]$ as a $\mathbb Z^n$-graded $K$-vector space, where $m_i\in M$ is homogeneous with respect to $\mathbb Z^n$-grading, $Z_i\subset\{x_1,\ldots,x_n\}$ such that $m_i K[Z_i] = \{um_i:\; u\in K[Z_i] \}\subset M$ is a free $K[Z_i]$-submodule of $M$. We define $\sdepth(\mathcal D)=\min_{i=1,\ldots,r} |Z_i|$ and $\sdepth_S(M)=\max\{\sdepth(\mathcal D)|\;\mathcal D$ is a Stanley decomposition of $M\}$. The number $\sdepth_S(M)$ is called the \emph{Stanley depth} of $M$. In \cite{apel}, J.\ Apel restated a conjecture firstly given by Stanley in \cite{stan}, namely that $\sdepth_S(M)\geq\depth_S(M)$ for any \Zng $\;M$. This conjecture proves to be false, in general, for $M=S/I$ and $M=J/I$, where $0\neq I\subset J\subset S$ are monomial ideals, see \cite{duval}.

Herzog, Vladoiu and Zheng show in \cite{hvz} that $\sdepth_S(M)$ can be computed in a finite number of steps if $M=I/J$, where $J\subset I\subset S$ are monomial ideals. In \cite{rin}, Rinaldo give a computer implementation for this algorithm, in the computer algebra system $CoCoA$ \cite{cocoa}. However, it is difficult to compute this invariant, even in some very particular cases.  For instance in \cite{par} Biro et al. proved that $\sdepth(\mathbf m)= \left\lceil n/2 \right\rceil$ where $\mathbf m=(x_1,\ldots,x_n)$. For a friendly introduction on Stanley depth we recommend \cite{her}.



Let $\Delta \subset 2^{[n]}$ be a simplicial complex. A face $F\in\Delta$ is called a \emph{facet}, if $F$ is maximal with respect to inclusion. We denote $\mathcal F(\Delta)$ the set of facets of $\Delta$. If $F\in\mathcal F(\Delta)$, we denote $x_F=\prod_{j\in F}x_j$. Then the \emph{facet ideal $I(\Delta)$} associated to $\Delta$ is the squarefree monomial ideal $I=(x_F\;:\;F\in \mathcal F(\Delta))$ of $S$. The facet ideal was studied by Faridi \cite{faridi} from the \texttt{depth} perspective.

A line graph of lenght $n$, denoted by $L_n$, is a graph with the vertex set $V=[n]$ and the edge set $E=\{\{1,2\},\{2,3\},\ldots,\{n-1,n\}\}$. The Stanley depth of the edge ideal associated to $L_n$ (which is in fact the facet ideal of $L_n$, if we look at $L_n$ as a simplicial complex) was computed by Alin \c Stefan in \cite{alin}. 

\footnotetext[1]{We greatfully acknowledge the use of the computer algebra system CoCoA (\cite{cocoa}) for our experiments.}
\footnotetext[2]{The support from grant ID-PCE-2011-1023 of Romanian Ministry of Education, Research and Innovation is gratefully acknowledged.}

Let $\Delta_{n,m}$ be the simplicial complex with the set of facets 
$\mathcal F(\Delta_{n,m})=\{\{1,2,\ldots,m\}, \linebreak, \{2,3,\ldots,m+1\},\cdots, \{n-m+1,n-m+2,\ldots,n\}\}$. We denote $I_{n,m} =(x_1x_2\cdots x_m, \linebreak x_2x_3\cdots x_{m+1}, \ldots, x_{n-m+1}x_{n-m+2}\cdots x_n )$
, the associated facet ideal. 

Note that $I_{n,m}$ is the path ideal of the graph $L_n$, provided with the direction given by $1<2<\ldots <n$, see \cite{tuy} for further details.

According to \cite[Theorem 1.2]{tuy}, 
$$pd(S/I_{n,m}) = \begin{cases} \frac{2(n-d)}{m+1},\; n\equiv d (mod\;(m+1))\;with\; 0 \leq d\leq m-1, \\ 
\frac{2n-m+1}{m+1},\; n\equiv m (mod\;(m+1)).
 \end{cases}$$ 
By Auslander-Buchsbaum formula (see \cite{real}), it follows that $\depth(S/I_{n,m})=n-pd(S/I_{n,m})$ and, by a straightforward computation, we can see $\depth(S/I_{n,m}) =  n+1 - \left\lfloor \frac{n+1}{m+1} \right\rfloor - \left\lceil \frac{n+1}{m+1} \right\rceil$. 

We prove that $\sdepth(S/I_{n,m})=\depth(S/I_{n,m}) = n+1 - \left\lfloor \frac{n+1}{m+1} \right\rfloor - \left\lceil \frac{n+1}{m+1} \right\rceil$, see Theorem $1.3$. In particular, we give another prove for the result of \cite[Theorem 1.2]{tuy}. Also, our result generalize \cite[Lemma 4]{alin}.

We recall some notions introduced by Faridi in \cite{faridi}. Let $\Delta$ be a simplicial complex. A facet $F$ of $\Delta$ is called a \emph{leaf}, if either $F$ is the only facet of $\Delta$, or there exists a facet $G$ in $\Delta$, $G\neq F$, such that $F\cap F' \subseteq F\cap G$ for all $F'\in \Delta$ with $F'\neq F$. A connected simplicial complex $\Delta$ is called a \emph{tree}, if every nonempty connected subcomplex of $\Delta$ has a leaf. This notion generalize trees from graph theory. Note that $\Delta_{n,m}$ is a tree, in the sense of the above definition.
 
According to \cite[Corollary 1.6]{jah}, if $I$ is the facet ideal associated to a tree (which is the case for $I_{n,m}$), it follows that $S/I$ would be pretty clean. However, there is a mistake in the second line of the proof of \cite[Proposition 1.4]{jah}, and therefore, this result might be wrong in general. On the other hand, if $I\subset S$ is a pretty clean monomial ideal, it is known that $\sdepth(S/I)= \depth(S/I)$, see \cite[Proposition 18]{her} for further details. 


\section{Main results}

We recall the well known Depth Lemma, see for instance \cite[Lemma 1.3.9]{real} or \cite[Lemma 3.1.4]{vasc}.

\begin{lema}(Depth Lemma)
If $0 \rightarrow U \rightarrow M \rightarrow N \rightarrow 0$ is a short exact sequence of modules over a local ring $S$, or a Noetherian graded ring with $S_0$ local, then

a) $\depth M \geq \min\{\depth N,\depth U\}$.

b) $\depth U \geq \min\{\depth M,\depth N +1 \}$.

c) $\depth N\geq \min\{\depth U - 1,\depth M\}$.
\end{lema}

In \cite{asia}, Asia Rauf proved the analog of Lemma $1.1(a)$ for $\sdepth$:

\begin{lema}
Let $0 \rightarrow U \rightarrow M \rightarrow N \rightarrow 0$ be a short exact sequence of $\mathbb Z^n$-graded $S$-modules. Then:
\[ \sdepth(M) \geq \min\{\sdepth(U),\sdepth(N) \}. \]
\end{lema}

Our main result is the following theorem.\pagebreak

\begin{teor}
$\sdepth(S/I_{n,m}) = \depth(S/I_{n,m})=n+1 - \left\lfloor \frac{n+1}{m+1} \right\rfloor - \left\lceil \frac{n+1}{m+1} \right\rceil$.
\end{teor}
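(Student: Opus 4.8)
The plan is to compute $\sdepth(S/I_{n,m})$ by induction, using Lemma~1.2 together with a recursive decomposition of the complement set of standard monomials. Since the $\depth$ value is already known from the cited result of Tuyl et al., the real work is to establish the lower bound $\sdepth(S/I_{n,m}) \geq n+1 - \left\lfloor \frac{n+1}{m+1} \right\rfloor - \left\lceil \frac{n+1}{m+1} \right\rceil$ and a matching upper bound. For the upper bound, one standard technique is to exhibit a monomial (for instance a suitable multiple of a variable, or the image of a single vertex near the boundary) whose Stanley depth in the quotient is forced to be small, or to use the fact that $\sdepth$ of the maximal ideal restricts the decomposition; alternatively, one may invoke the general inequality $\sdepth(S/I) \le \depth(S/I)$ that holds for this class once cleanness-type structure is available, though given the caveat in the introduction about the flawed proof in \cite{jah}, I would avoid relying on pretty-cleanness and instead prove both bounds directly.

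First I would set up the inductive framework. Let $d(n,m) = n+1 - \left\lfloor \frac{n+1}{m+1} \right\rfloor - \left\lceil \frac{n+1}{m+1} \right\rceil$. The natural short exact sequence to exploit comes from splitting on the last variable $x_n$: one considers
\[
0 \longrightarrow S/(I_{n,m}:x_n) \stackrel{\cdot x_n}{\longrightarrow} S/I_{n,m} \longrightarrow S/(I_{n,m}, x_n) \longrightarrow 0.
\]
The quotient $S/(I_{n,m},x_n)$ reduces, after discarding $x_n$, to a polynomial ring tensored with $S/I_{n-1,m}$, while the colon ideal $(I_{n,m}:x_n)$ removes the last generator and produces $I_{n-m,m}$ together with the free variables $x_{n-m+1},\dots,x_{n-1}$. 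I would compute $\sdepth$ of both end terms via the induction hypothesis and a lemma on how $\sdepth$ behaves under adjoining free variables ($\sdepth(M[x]) = \sdepth(M)+1$), then feed these into Lemma~1.2 to obtain the lower bound for $\sdepth(S/I_{n,m})$. The main combinatorial obstacle is bookkeeping the floor/ceiling arithmetic across the three residue cases $n \equiv d \pmod{m+1}$, checking that $\min$ of the two end-term depths equals exactly $d(n,m)$ in each case; this requires careful case analysis on $n \bmod (m+1)$, and getting the base cases (small $n$ relative to $m$, where $S/I_{n,m}$ is either the full ring or has a single generator) to align with the formula.

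The upper bound is where I expect the genuine difficulty. Here I would argue that no Stanley decomposition can do better than $d(n,m)$. A clean approach is to locate an explicit subset of variables that must be ``used up'' by the generators: since the generators of $I_{n,m}$ are consecutive products of $m$ variables covering $[n]$ with overlaps, there is a rigidity forcing certain $Z_i$ to omit variables. Concretely, I would consider the restriction of any Stanley decomposition to a well-chosen monomial (such as $x_{m+1}x_{2(m+1)}\cdots$ supported on the ``gap'' positions) and count the maximal number of free variables available, matching $d(n,m)$. Alternatively, since the lower bound already forces $\sdepth \ge \depth$ and it is classical that $\sdepth(S/I) \le \depth(S/I)$ fails to hold in general but the reverse Stanley inequality direction is what we want, I would instead establish the upper bound by the same inductive sequence read through part of the Depth Lemma analog, or by a direct partition argument on the poset of squarefree-type intervals as in \cite{hvz}. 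The crux is ensuring the upper and lower bounds meet precisely at $n+1 - \left\lfloor \frac{n+1}{m+1} \right\rfloor - \left\lceil \frac{n+1}{m+1} \right\rceil$; if the inductive lower bound already saturates the known $\depth$, then combined with $\sdepth \ge \depth$ being what we are proving and a separate hard upper bound $\sdepth \le \depth$, equality follows, yielding in particular the stated reproof of the projective dimension formula.
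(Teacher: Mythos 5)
Your overall architecture (induction on $n$ and $m$, a colon/sum short exact sequence fed into Lemma~1.2 for the lower bound, and a separate combinatorial upper bound) matches the paper's strategy, but two of your concrete choices break down. First, the exact sequence you split off the last variable does not stay inside the inductive family: with $u_i = x_i\cdots x_{i+m-1}$, only $u_{n-m+1}$ involves $x_n$, and one computes $(I_{n,m}:x_n) = (u_1,\ldots,u_{n-m-1},\, x_{n-m+1}\cdots x_{n-1})$, i.e.\ a path ideal on $x_1,\ldots,x_{n-2}$ \emph{plus} an extra generator of degree $m-1$ sitting at the end. This is not $I_{n-m,m}$ extended by free variables, so the induction hypothesis does not apply to it. The paper avoids this by coloning with respect to carefully chosen \emph{interior} variables $x_{j(m+1)-1}$ (positions $m, 2m+1, \ldots$), iterated $k=\left\lfloor\frac{n+1}{m+1}\right\rfloor$ times; the point of that choice is that each resulting colon ideal $L_j$ and sum ideal $U_j$ is again isomorphic (after a tensor decomposition into two independent blocks and adjoining free variables) to a path ideal $I_{n',m}$ or $I_{n',m-1}$, to which induction applies. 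You would need to either switch to such interior variables or enlarge your inductive statement to cover the mixed ideals your sequence produces.

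Second, the upper bound is not actually established in your sketch. The inequality $\sdepth(S/I)\le\depth(S/I)$ is not available in general (that is essentially the content being proved), and your candidate witness monomial $x_{m+1}x_{2(m+1)}\cdots$ supported on the gap positions is the wrong object: the correct witness is $u=\prod_{j\in\tau}x_j$ where $\tau$ is the union of the blocks $\{1+j(m+1),\ldots,m-1+j(m+1)\}$ (the \emph{complement} of the gaps, suitably adjusted at the right end), for which $u\notin I_{n,m}$ but $x_iu\in I_{n,m}$ for every $i\notin\tau$; in the Herzog--Vladoiu--Zheng poset this forces $\sdepth(S/I_{n,m})\le|\tau|=\varphi(n,m)$. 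The paper's primary route to the upper bound is different again and you do not mention it: the monotonicity $\sdepth(S/I)\le\sdepth(S/(I:v))$ for a monomial $v$ (\cite[Proposition 2.7]{mirci}), applied to $L_k=(I_{n,m}:x_mx_{2m+1}\cdots x_{k(m+1)-1})$, whose Stanley depth is computed exactly by induction. Finally, the intermediate range $m+1\le n\le 2m-1$ needs the separate observation $I_{n,m}=x_m(I_{n,m}:x_m)$ together with $\sdepth(S/I_{n,m})=\sdepth(S/(I_{n,m}:x_m))$ from \cite[Theorem 1.4]{mir}; your ``base cases'' remark does not identify this step.
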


\begin{proof}
We use induction on $m\geq 1$ and $n\geq m$. The case $m=1$ is trivial. The case $m=2$ follows from \cite[Lemma 2.8]{mor} and \cite[Lemma 4]{alin}. 

We assume $m\geq 3$. If $n=m$, then $\sdepth(S/I_{n,m})=\depth(S/I_{n,m})=m-1$, since $I_{n,n}=(x_1\cdots x_n)$ is principal. Assume $m+1\leq n\leq 2m-1$. Note that $I_{n,m}=x_{m}(I_{n,m}:x_m)$. We have $\sdepth(S/I_{n,m})=\sdepth(S/(I_{n,m}:x_m))$, by \cite[Theorem 1.4]{mir}. Also, we obviously have $\depth(S/I_{n,m})=\depth(S/(I_{n,m}:x_m))$. On the other hand, $S/(I_{n,m}:x_m)$ is isomorphic to $S'/(I_{n-1,m-1})[y]$, where $S'=K[x_1,\ldots,x_{m-1},x_{m+1},\ldots,x_n]$ and therefore, by induction hypothesis and \cite[Lemma 3.6]{hvz}, we get $\sdepth(S/I_{n,m})= \depth(S/I_{n,m}) = 1 + ( n - \left\lfloor \frac{n}{m} \right\rfloor - \left\lceil \frac{n}{m} \right\rceil ) = 1 + n - 3 = n-2$, as required.

It remains to consider the case $m\geq 3$ and $n\geq 2m$. 
Let $k:=\left\lfloor \frac{n+1}{m+1} \right\rfloor$ and $a=n+1-k(m+1)$. We denote $\varphi(n,m):=n+1 - \left\lfloor \frac{n+1}{m+1} \right\rfloor - \left\lceil \frac{n+1}{m+1} \right\rceil$. One can easily see that $\varphi(n,m)= \begin{cases} n+1-2k,\;a=0 \\ n-2k,\;a\neq 0  \end{cases}$.

We consider the ideals $L_0:=I_{n,m}$ and $L_{j}:=(L_{j-1}:x_{j(m+1)-1})$, where $1\leq j\leq k$. We denote $U_j:=(L_{j-1},x_{j(m+1)-1})$ for all $1\leq j \leq k$. We have the following short exact sequences:

\[ (\mathcal S_k):\;\; 0 \longrightarrow S/L_j \stackrel{\cdot x_{j(m+1)-1}}{\longrightarrow} S/L_{j-1} \longrightarrow S/U_j \longrightarrow 0,\;\;1\leq j\leq k. \]

We denote $u_i:=x_i\cdots x_{i+m-1}$, for $1\leq i\leq n-m+1$. Note that $G(L_0)=\{ u_1,\ldots,u_{n-m+1} \}$, $G(L_1)=\{ \frac{u_1}{x_m},\ldots,\frac{u_m}{x_m},  u_{m+2},\ldots,u_{n-m+1} \}$, because $u_{m+1}\in (u_m/x_m)$, and, also, \linebreak $G(U_1)=\{ x_m,u_{m+1},\ldots, u_{n-m+1} \}$. Moreover, one can easily check that: 
$$L_j = (\frac{u_1}{x_m},\ldots,\frac{u_m}{x_m}, \frac{u_{m+2}}{x_{2m+1}}, \ldots, \frac{u_{2m+1}}{x_{2m+1}}, \ldots, \frac{u_{(m+1)j-m}}{x_{(m+1)j-1}},\ldots, \frac{u_{(m+1)j-1}}{x_{(m+1)j-1}}, u_{(m+1)j+1},\ldots, u_{n-m+1} ),$$ 

for all $1\leq j\leq k-1$. It follows that:
$$U_{j+1} = (\frac{u_1}{x_m},\ldots,\frac{u_m}{x_m}, 
\ldots, \frac{u_{(m+1)j-m}}{x_{(m+1)j-1}},\ldots, \frac{u_{(m+1)j-1}}{x_{(m+1)j-1}}, x_{(m+1)(j+1)-1}, u_{(m+1)(j+1)},\ldots, u_{n-m+1}), $$
for all $1\leq j\leq k-1$. Also, we have:
$$L_k = (\frac{u_1}{x_m},\ldots,\frac{u_m}{x_m}, \ldots,
\frac{u_{(m+1)(k-1)-m}}{x_{(m+1)(k-1)-1}},\ldots, \frac{u_{(m+1)(k-1)-1}}{x_{(m+1)(k-1)-1}},
\frac{u_{(m+1)k-m}}{x_{(m+1)k-1}},\ldots, \frac{u_{t}}{x_{(m+1)k-1}}),$$
where $t=n-m$ if $a=m$, or $t=n-m+1$ otherwise.

Note that $|G(L_k)| = m(k-1) + (t+1) - (m+1)k + m = t + 1 - k$ and, moreover, $L_k \cong I_{t+m-k-1, m-1}S$. Thus, by induction hypothesis and \cite[Lemma 3.6]{hvz}, we have $\depth(S/L_k) = \sdepth(S/L_k) = n - (t+m-k-1) + \varphi(t+m-k-1, m-1) = n+1 - \left\lfloor \frac{t+m-k}{m} \right\rfloor - \left\lceil \frac{t+m-k}{m}\right\rceil$.

If $a=m$, then $t=n-m$, $n=k(m+1)+m-1$, $t+m-k = n-k = (k+1)m-1$ and thus $\depth(S/L_k) = \sdepth(S/L_k) = n + 1 - k-(k+1) = n-2k=\varphi(n,m)$. If $a=0$, then $t+m-k = km$ and thus $\depth(S/L_k) = \sdepth(S/L_k) = n + 1 - 2k$. 

If $0<a<m$, then $t+m-k = km+a$ and thus $\depth(S/L_k) = \sdepth(S/L_k)=n-2k$. In all the cases, we have $\depth(S/L_k) = \sdepth(S/L_k) = \varphi(n,m)$.

Note that $S/U_1 \cong K[x_{m+1},\ldots,x_n]/(u_{m+1},\ldots, u_{n-m+1}) [x_1,\ldots,x_{m-1}]$ and therefore, by induction hypothesis,
$\depth(S/U_1)=\sdepth(S/U_1) = m-1 + \varphi(n-m,m) = n - \left\lfloor \frac{n-m+1}{m+1} \right\rfloor - \left\lceil \frac{n-m+1}{m+1} \right\rceil$. Note that $\frac{n-m+1}{m+1} = k-1 + \frac{a+1}{m+1}$ and therefore $\left\lceil \frac{n-m+1}{m+1} \right\rceil = k$. On the other hand, if $a<m$ then $\left\lfloor \frac{n-m+1}{m+1} \right\rfloor = k-1$ and if $a=m$ then $\left\lfloor \frac{n-m+1}{m+1} \right\rfloor = k$. It follows that $\depth(S/U_1)=\sdepth(S/U_1) = \begin{cases} n+1-2k,\;a<m \\ n-2k,\; a=m \end{cases} \geq \varphi(n,m)$.

Moreover, $\depth(S/U_1)=\sdepth(S/U_1) =\varphi(n,m)$ if and only if $a=0$ or $a=m$. Otherwise, $\depth(S/U_1)=\sdepth(S/U_1) =\varphi(n,m)+1$. Assume $a=0$ or $a=m$. From the exact sequence $(\mathcal S_1) 0 \rightarrow S/L_1 \rightarrow S/L_0 \rightarrow S/U_1 \rightarrow 0$, Lemma $1.1$ and Lemma $1.2$, it follows that $\sdepth(S/L_0)\geq \depth(S/L_0)=\varphi(n,m)$. On the other hand, since $L_k=(L_0:x_mx_{2m+1}\cdots x_{k(m+1)-1})$, for example by \cite[Proposition 2.7]{mirci}, $\varphi(n,m) = \sdepth(S/L_k)\geq \sdepth(S/L_0) \geq \varphi(n,m)$. Thus, $\sdepth(S/L_k)=\varphi(n,m)$.

It remains to consider the case when $1<a<m-1$. We claim that:  $$(*) \sdepth(S/U_j)\geq \depth(S/U_j)\geq \varphi(n,m) \;\;\emph{for\;\;all}\;\; 2\leq j\leq k.$$ 

Assume this is the case. Using $1.1$, $1.2$ and the short exact sequences $(\mathcal S_k)$, we get, inductively, that $\sdepth(S/L_j)\geq \depth(S/L_j)=\varphi(n,m)$ for all $j<k-1$. 
Again, using for example \cite[Proposition 2.7]{mirci}, we get $\sdepth(S/L_0) = \varphi(n,m)$.

In order to complete the proof, we need to show $(*)$. Note that $U_k = (V_k,x_{(m+1)k-1})$, where
$V_k=(\frac{u_1}{x_m},\ldots,\frac{u_m}{x_m}, \ldots, \frac{u_{(m+1)j-m}}{x_{(m+1)j-1}},\ldots, \frac{u_{(m+1)(k-1)-1}}{x_{(m+1)(k-1)-1}})\cong I_{mk-2,m-1}S $. By induction hypothesis and \cite[Lemma 3.6]{hvz}, it follows that $\sdepth(S/U_k)=\depth(S/U_k)= n-(mk-2)-1 + \varphi(mk-2,m-1) = n - \left\lfloor \frac{mk-1}{m} \right\rfloor - \left\lceil \frac{mk-1}{m} \right\rceil = n - (k-1) - k = n-2k+1 = \varphi(n,m)+1$.

If $1\leq j <k$, we have $S/U_j \cong (S/V_j \otimes_S S/W_j S)/(x_{(m+1)j-1})(S/V_j \otimes_S S/W_j S)$, where $V_j=(\frac{u_1}{x_m},\ldots,\frac{u_m}{x_m}, \ldots, \frac{u_{(m+1)j-m}}{x_{(m+1)j-1}},\ldots, \frac{u_{(m+1)j-1}}{x_{(m+1)j-1}})$ and $W_j = (u_{(m+1)(j+1)},\ldots, u_{n-m+1})$. Since $x_{(m+1)j-1}$ is regular on $S/V_j \otimes_S S/W_j$ by \cite[Corollary 1.12]{asia} and \cite[Theorem 3.1]{asia} or \cite[Theorem 1.2]{mirci}, it follows that  $\depth(S/U_j)=\depth(S/V_j \otimes_S S/W_j)-1 = \depth(S/V_j) + \depth(S/W_j) - n - 1$ and $\sdepth(S/U_j)=\sdepth(S/V_j \otimes_S S/W_j)-1 \geq \sdepth(S/V_j) + \sdepth(S/W_j) - n - 1$. 

On the other hand, $V_j \cong I_{m(j+1)-2,m-1}S$ and thus, by induction hypothesis, 
$\sdepth(S/V_j)=\depth(S/V_j)=n+1 - \left\lfloor \frac{m(j+1)-1}{m} \right\rfloor - \left\lceil \frac{m(j+1)-1}{m} \right\rceil = n - 2j$. Also, $W_j \cong I_{n-(m+1)(j+1)+1,m}$ and, by induction hypothesis, we have
$\sdepth(S/W_j) = \depth(S/W_j) = n+1 - \left\lfloor \frac{n-(m+1)(j+1)+2}{m+1} \right\rfloor - \left\lceil \frac{n-(m+1)(j+1)+2}{m+1} \right\rceil = n+1 + 2(j+1) -\left\lfloor \frac{n+2}{m+1} \right\rfloor - \left\lceil \frac{n+2}{m+1} \right\rceil$.

It follows that $\sdepth(S/U_j)=\depth(S/U_j) = n+2 - \left\lfloor \frac{n+2}{m+1} \right\rfloor - \left\lceil \frac{n+2}{m+1} \right\rceil \geq \varphi(n,m)$, since either $\left\lfloor \frac{n+2}{m+1} \right\rfloor = \left\lfloor \frac{n+1}{m+1} \right\rfloor$ and $\left\lceil \frac{n+2}{m+1} \right\rceil = \left\lceil \frac{n+1}{m+1} \right\rceil$, either $\left\lfloor \frac{n+2}{m+1} \right\rfloor = \left\lfloor \frac{n+1}{m+1} \right\rfloor+1$ and $\left\lceil \frac{n+2}{m+1} \right\rceil = \left\lceil \frac{n+1}{m+1} \right\rceil$ or either $\left\lfloor \frac{n+2}{m+1} \right\rfloor = \left\lfloor \frac{n+1}{m+1} \right\rfloor$ and $\left\lceil \frac{n+2}{m+1} \right\rceil = \left\lceil \frac{n+1}{m+1} \right\rceil+1$.
\end{proof}

\pagebreak

\begin{exm}
\emph{Let $I_{6,3}=(x_1x_2x_3,x_2x_3x_4,x_3x_4x_5,x_4x_5x_6) \subset S:=K[x_1,\ldots,x_6]$. Note that $\varphi(7,4) = 7 - \left\lfloor \frac{7}{4}\right\rfloor  -\left\lceil \frac{7}{4}  \right\rceil = 4$. Let $L_0=I_{6,3}$, $L_1=(L_0:x_3)= (x_1x_2,x_2x_4,x_4x_5)$ and $U_1=(L_0,x_3) = (x_3,x_4x_5x_6)$. Since $L_1\cong I_{4,2}S$, it follows that $\depth(S/L_1) = \sdepth(S/L_1) = \depth(S/I_{4,2}S) = 2 + \depth(K[x_1,\ldots,x_4]/I_{4,2}) = 2+\varphi(4,2) = 4$.}

\emph{On the other hand, since $U_1$ is a complete intersection, $\depth(S/U_1)=\sdepth(S/U_1)=4$. We consider the short exact sequence $0 \rightarrow S/L_1 \rightarrow S/L_0 \rightarrow S/U_1 \rightarrow 0$. By Lemma $1.2$, it follows that $\sdepth(S/L_0)\geq 4$. On the other hand, since $L_1=(L_0:x_3)$, one has $\sdepth(S/L_0)\leq \sdepth(S/L_1) =4$. Thus $\sdepth(S/L_0)= 4$. Also, by Lemma $1.1$, $\depth(S/L_0) =4$.}
\end{exm}

In the following, we present another way to prove that $\sdepth(S/I_{n,m})\leq \varphi(n,m)$.

Let $\mathcal P\subset 2^{[n]}$ be a poset. If $C,D\subset [n]$, the \emph{interval} $[C,D]$ consist in all the subsets $X$ of $[n]$ such that $C\subset X\subset D$. Let $\mathbf P:\mathcal P=\bigcup_{i=1}^r [F_i,G_i]$ be a partition of $\mathcal P$, i.e. $[F_i,G_i]\cap [F_j,G_j] = \emptyset$ for all $i\neq j$. We denote $\sdepth(\mathbf P):=\min_{i\in [r]} |D_i|$. Also, we define the Stanley depth of $\mathcal P$, to be the number
\[\sdepth(\mathcal P) = \max\{\sdepth(\mathbf P):\; \mathbf P \; \emph{is\; a\; partition\; of} \; \mathcal P \}.\]
Now, for $d\in\mathbb N$ and $\sigma\in \mathcal P$, we denote
\[ \mathcal P_d = \{\tau\in\mathcal P\;:\; |\tau|=d \}\;,\;\mathcal P_{d,\sigma} = \{ \tau\in\mathcal P_d\;:\; \sigma\subset\tau \}. \]
Note that if $\sigma \in \mathcal P$ such that $P_{d,\sigma}=\emptyset$, then $\sdepth(\mathcal P)<d$. Indeed, let $\mathbf P:\mathcal P=\bigcup_{i=1}^r [F_i,G_i]$ be a partition of $\mathcal P$ with $\sdepth(\mathcal P)=\sdepth(\mathbf P)$. Since $\sigma\in\mathcal P$, it follows that  $\sigma \in [F_i,G_i]$ for some $i$. If $|G_i|\geq d$, then it follows that $\mathcal P_{d,\sigma}\neq\emptyset$, since there are subsets in the interval $[F_i,G_i]$ of cardinality $d$ which contain $\sigma$, a contradiction. Thus, $|G_i|<d$ and therefore $\sdepth(\mathcal P)<d$.

We recall the method of Herzog, Vladoiu and Zheng \cite{hvz} for computing the Stanley depth of $S/I$ and $I$, where $I$ is a squarefree monomial ideal. Let $G(I)=\{u_1,\ldots,u_s\}$ be the set of minimal monomial generators of $I$. We define the following two posets:
\[ \mathcal P_I:=\{\sigma \subset [n]:\; u_i|x_{\sigma}:=\prod_{j\in\sigma}x_j \;\emph{for\;some}\;i\;\}\;\emph{and}\; 
\mathcal P_{S/I}:=2^{[n]}\setminus \mathcal P_I. \]
Herzog Vladoiu and Zheng proved in \cite{hvz} that $\sdepth(I)=\sdepth(\mathcal P_{I})$ and $\sdepth(S/I)=\sdepth(\mathcal P_{S/I})$.

The above method is useful to give upper bounds for the $\sdepth(S/I)$, where $I\subset S$ is a monomial ideal, and, in particular cases, to compute the exact value of $\sdepth(S/I)$. That's exactly the case for $S/I_{n,m}$!

Let $\mathcal P:=\mathcal P_{S/I_{n,m}}$. We denote $k=\left\lfloor \frac{n}{m+1} \right\rfloor$ and we define $$\sigma=\bigcup_{j=0}^{k-1} \{1+j(m+1), 2+j(m+1),\ldots, m-1+j(m+1)\}.$$ 
We consider two cases.

(a) If $n=(k+1)(m+1)-1$ or $n=(k+1)(m+1)-2$, let $\tau =\sigma\cup \{k(m+1)+1, \linebreak k(m+1)+2,\ldots,k(m+1)+m-1\}$. Note that $|\tau|= (k+1)(m-1)$ and $\mathcal P_{d,\tau}=\emptyset$, for $d = |\tau|+1$. 
Indeed, $u=\prod_{j\in\tau}x_j \notin I_{n,m}$, but $x_iu\in I_{n,m}$ for all $i\notin \tau$.

(b) If $n$ is not as in the case (a), let $\tau =\sigma\cup \{k(m+1),\ldots,n\}$. Note that $n-|\tau|=2k-1$ and $\mathcal P_{d,\tau}=\emptyset$, for $d = |\tau|+1$. Indeed, $u=\prod_{j\in\tau}x_j \notin I_{n,m}$, but $x_iu\in I_{n,m}$ for all $i\notin \tau$.

Therefore $\sdepth(S/I_{n,m}) \leq |\tau|$, in both cases. On the other hand, one can easily check that $|\tau|= n+1 - \left\lfloor \frac{n+1}{m+1} \right\rfloor - \left\lceil \frac{n+1}{m+1} \right\rceil$.  Therefore  $\sdepth(S/I_{n,m})\leq \varphi(n,m)$.

\begin{obs}
\emph{One possible way to generalize Theorem $1.3$ and \cite[Theorem 6]{alin}, in the same time, would be to prove that $\sdepth(S/I_{n,m}^k) = \depth(S/I_{n,m}^k)$ for any $k\geq 1$. Furthermore, we might conjecture that if $\Delta$ is a simplicial tree, then $\sdepth(S/I(\Delta)^k) = \depth(S/I(\Delta)^k)$ for any $k\geq 1$.}
\end{obs}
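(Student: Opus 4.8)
The plan is to prove the equality $\sdepth(S/I_{n,m}^k)=\depth(S/I_{n,m}^k)$ by a double induction, now on the exponent $k$ in addition to $m$ and $n$, taking Theorem $1.3$ (the case $k=1$) and \cite[Theorem 6]{alin} (the case $m=2$, all $k$) as the base of the induction. As in the proof of Theorem $1.3$, I would split the statement into the Stanley inequality $\sdepth(S/I_{n,m}^k)\geq \depth(S/I_{n,m}^k)$, to be obtained from short exact sequences together with Lemma $1.1$ and Lemma $1.2$, and the reverse bound $\sdepth(S/I_{n,m}^k)\leq \depth(S/I_{n,m}^k)$, to be obtained from the Herzog--Vladoiu--Zheng poset method by exhibiting an explicit combinatorial bottleneck.

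For the lower bound I would set up the analogue of the colon filtration used for $k=1$. Fixing a suitable variable, say $x=x_m$, one has the short exact sequence
\[ 0 \longrightarrow S/(I_{n,m}^k:x) \longrightarrow S/I_{n,m}^k \longrightarrow S/(I_{n,m}^k,x) \longrightarrow 0, \]
and, alternatively, the ``descent in $k$'' sequence
\[ 0 \longrightarrow I_{n,m}^{k-1}/I_{n,m}^k \longrightarrow S/I_{n,m}^k \longrightarrow S/I_{n,m}^{k-1} \longrightarrow 0. \]
The hope is that, just as $S/U_j$ factored through a regular element in Theorem $1.3$, the modules $S/(I_{n,m}^k,x)$ and the graded pieces $I_{n,m}^{k-1}/I_{n,m}^k$ admit a tensor-product description on disjoint variable sets, so that \cite[Corollary 1.12]{asia} and \cite[Theorem 3.1]{asia} (or \cite[Theorem 1.2]{mirci}) let me strip a regular variable and reduce the number of variables, while \cite[Lemma 3.6]{hvz} tracks the splitting of $\sdepth$ over polynomial extensions. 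Feeding the inductive values into Lemma $1.2$ along these sequences should produce a Stanley decomposition of $S/I_{n,m}^k$ whose $\sdepth$ matches the depth computed from Lemma $1.1$.

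For the matching depth value and for the upper bound I would proceed as in the last part of the excerpt. On the depth side, since $\Delta_{n,m}$ is a simplicial tree its facet ideal is normally torsion-free, so $\operatorname{Ass}(S/I_{n,m}^k)$ is independent of $k$; combined with Brodmann stabilisation this should pin down a closed form for $\depth(S/I_{n,m}^k)$ against which the lower bound can be compared term by term. On the $\sdepth$ side I would construct, for each $k$, a monomial $u$ generalising the set $\tau$ built at the end of the excerpt, chosen so that $u\notin I_{n,m}^k$ while $x_iu\in I_{n,m}^k$ for all $i$ outside the support of $u$; in the poset $\mathcal P_{S/I_{n,m}^k}$ this forces $\mathcal P_{d,\sigma}=\emptyset$ for $\sigma=\supp(u)$ and $d=\deg u+1$, hence the required upper bound.

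The step I expect to be the genuine obstacle is precisely the reduction used freely for $k=1$: the colon ideals $(I_{n,m}^k:x)$ and the quotients $I_{n,m}^{k-1}/I_{n,m}^k$ do \emph{not} remain inside the two-parameter family $\{I_{n',m'}\}$, so the clean recursion of Theorem $1.3$ breaks down and the tensor-factorisation above is only a heuristic that must be verified by an explicit, uniform-in-$k$ analysis of the minimal generators of $I_{n,m}^k$. A natural way to make this uniform is to polarise $I_{n,m}^k$ to a squarefree ideal in a larger ring, where the Herzog--Vladoiu--Zheng poset is again a poset of subsets, and then to prove that polarisation shifts $\sdepth$ and $\depth$ by the \emph{same} amount; establishing that last compatibility for this family is, I believe, the crux. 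Finally, the simplicial-tree generalisation is strictly harder, since already $\depth(S/I(\Delta)^k)$ has no closed form in general; there I would expect to need the normally-torsion-free property of simplicial forests together with a leaf-based structural decomposition of $I(\Delta)^k$, reducing to a smaller tree in the spirit of the inductive definition of a tree recalled in the Introduction.
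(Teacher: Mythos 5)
The statement you are addressing is not a theorem of the paper: it is Remark 1.5, in which the author merely \emph{proposes} the equality $\sdepth(S/I_{n,m}^k)=\depth(S/I_{n,m}^k)$ for all $k\geq 1$ as a possible common generalization of Theorem 1.3 and of \c Stefan's result, and explicitly labels the simplicial-tree version a conjecture. The paper contains no proof of either claim, so there is no argument to compare yours against; what you have written is a research programme for an open problem, and you concede as much in your final paragraph.

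As a programme it assembles the right ingredients, but every load-bearing step is unverified. The sequence $0\to I_{n,m}^{k-1}/I_{n,m}^k\to S/I_{n,m}^k\to S/I_{n,m}^{k-1}\to 0$ only helps if you can compute $\sdepth$ and $\depth$ of $I_{n,m}^{k-1}/I_{n,m}^k$, which is not a cyclic module and leaves the two-parameter family on which the induction of Theorem 1.3 runs; Lemma 1.2 then yields at best a lower bound for the middle term, never the exact value. The tensor factorisation of $(I_{n,m}^k,x_m)$ into ideals in disjoint sets of variables, which is precisely what makes the $k=1$ treatment of $S/U_j$ work, genuinely fails for $k\geq 2$, because products of $k$ generators straddle the cut at $x_m$ in many inequivalent ways. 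On the depth side, no closed formula for $\depth(S/I_{n,m}^k)$ is available to compare against: the results on powers of edge ideals of trees give bounds rather than equalities, and normal torsion-freeness of simplicial trees controls $\operatorname{Ass}(S/I^k)$ and hence the eventual behaviour of the depth, not its value for each fixed $k$. Finally, the compatibility of $\sdepth$ with polarisation that you correctly identify as the crux was itself unresolved at the time of writing. None of this makes your outline a wrong direction, but it is a statement of the difficulties, not a proof, which is consistent with the paper leaving the assertion as a conjecture.
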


\vspace{2mm} \noindent {\footnotesize
\begin{minipage}[b]{15cm}
Mircea Cimpoea\c s, Simion Stoilow Institute of Mathematics, Research unit 5, P.O.Box 1-764,\\
Bucharest 014700, Romania\\
E-mail: mircea.cimpoeas@imar.ro
\end{minipage}}
\end{document}